\documentclass[10pt,twoside,reqno]{amsart}
\usepackage[all]{xy}
        \usepackage {amssymb,latexsym,amsthm,amsmath,mathtools,dsfont,mathrsfs}
        \usepackage{enumitem,color}

\usepackage[
  top=1in,
  bottom=1in,
  left=1.2in,
  right=1.2in
]{geometry}
\usepackage{makecell}
\usepackage{mathtools}

\usepackage{longtable}

\usepackage[backend=bibtex,backref=true,maxbibnames=999,doi=false,isbn=false,url=false
]{biblatex} 
\usepackage{float}
\usepackage{hyperref}
\long\def\symbolfootnote[#1]#2{\begingroup%
\def\thefootnote{\fnsymbol{footnote}}\footnote[#1]{#2}\endgroup}

\makeatletter
\def\imod#1{\allowbreak\mkern10mu({\operator@font mod}\,\,#1)}
\makeatother
\makeatletter
\renewcommand*\env@matrix[1][*\c@MaxMatrixCols c]{%
  \hskip -\arraycolsep
  \let\@ifnextchar\new@ifnextchar
  \array{#1}}
\makeatother
\usepackage[capitalize,nameinlink]{cleveref} 
\usepackage{comment} 
\usepackage{xcolor} 
\hypersetup{
    colorlinks,
    linkcolor={red!80!black},
    citecolor={green!80!black},
    urlcolor={blue!80!black}
}

\newtheorem{theorem}{Theorem}[section]
\newtheorem{lemma}[theorem]{Lemma}
\newtheorem{corollary}[theorem]{Corollary}
\newtheorem{proposition}[theorem]{Proposition}
\newtheorem*{theorem*}{Theorem}
\theoremstyle{definition}

\newtheorem{remark}[theorem]{Remark}
\newtheorem{example}[theorem]{Example}

\numberwithin{equation}{section}
\newcommand{\ignore}[1]{}

\newcommand{\mynote}[1]{}
\newcommand{\Spec}{\operatorname{Spec}}

\title[Commuting graph of non-abelian groups]{Commuting graph of non-abelian groups of order $p^n$ with center having $p^{n-2}$ elements}
\author[Kaur D.]{Dilpreet Kaur}
\email{dilpreetkaur@iitj.ac.in}
\address{Indian Institute of Technology Jodhpur
N.H. 62, Nagaur Road, Karwar Jodhpur 342030
Rajasthan}
\author[Kumar G.]{Gaurav Kumar}
\email{p24ma0001@iitj.ac.in}
\address{Indian Institute of Technology Jodhpur
N.H. 62, Nagaur Road, Karwar Jodhpur 342030
Rajasthan}
\author[Tiwari S.]{Shivam Tiwari}
\email{m24ma1018@iitj.ac.in}
\address{Indian Institute of Technology Jodhpur
N.H. 62, Nagaur Road, Karwar Jodhpur 342030
Rajasthan}
\thanks{}
\date{\today}
\subjclass[2020]{05C25, 20D15}
\keywords{Commuting Graph, $p$-groups, Centralizers}
\begin{document}
\setcounter{section}{0}
\begin{abstract}
Let $G$ be a group. The commuting graph $\zeta(G, V)$ of a group $G$ has a 
vertex set $V\subseteq G,$ two vertices are connected with an edge if the corresponding elements commute in $G.$ In this article, we study the commuting graphs for the non-abelian $p$-groups of order $p^n$ with center size $p^{n-2}.$ We study detour distance, metric dimension, resolving polynomials, and the spectral properties for these graphs.
\end{abstract}
\maketitle
\section{Introduction}\label{Section.intro}

For a group $G,$ there are various graphs defined on it, which capture the structure of the group from different aspects.  One of the most commonly studied graphs on a group is the commuting graph. Let $G$ be a group. The commuting graph $\zeta(G, V)$ of a group $G$ has set of vertices $V\subseteq G,$ and two vertices in graph are connected if and only if the corresponding elements commute in the group $G.$  

The commuting graph of a group was first used by Brauer and Flower in \cite{BF} to study finite groups of even order. This was one of the early step towards the classification of simple groups. Later, Fischer also used related concepts in the discovery of Fischer groups in \cite{Fischer}. However they did not define the commuting graph formally. Bertram formally defined and used the term commuting graphs in his article\cite{BE}. 
Since then, commuting graphs has been studied by many authors. In \cite{Ali02062016}, authors study the commuting graphs defined on dihedral groups. In \cite{kakkar_gen} and \cite{kumar2021commuting}, authors advanced the study of commuting graphs by studying commuting graphs defined on generalized dihedral groups, and semi-dihedral groups, respectively. The commuting graphs defined on symmetric groups and alternating groups has been studied in \cite{Iranmanesh2008} and \cite{Bates2009}. In \cite {Arvind2025}, computational problem of deciding whether a given graph is the commuting graph of a finite group has been discussed. In \cite{SV}, commuting graphs of non-abelian $p$-groups of order $p^4$ with center size $p$ are studied, where $p$ is an odd prime number. We refer the interested reader to see the survey article \cite{cameron2021graphs}.

The aim of this article is to study the commuting graph of non-abelian groups of order $p^n$ with center size $p^{n-2},$ where $p$ is an odd prime number. 
This article is organized as follows: In section \ref{section.partition}, we discuss the basic properties of groups of order $p^{n}$ with center containing $p^{n-2}$ elements. In the next section \ref{Section.commuting}, we compute the graph, the degree of its vertices, its number of edges, and chromatic number. In section \ref{section.Detour},
 we study the detour distance properties of commuting graphs of the non-abelian $p$-group of order $p^{n}$ with center containing $p^{n-2}$ elements. We also compute its clique number and its independence number. The section \ref{section.polynomial} is devoted to the study of metric dimensions and resolving polynomials of such graphs. We conclude the article with the study of spectral properties of commuting graph of these $p$-groups, such as spectral properties of signless Laplacian matrices and normalized Laplacian matrices.
\section{Partition of groups of order $p^n$ with center having $p^{n-2}$ elements.}\label{section.partition}
In this section, $p$ denotes an odd prime number. Let $G$ be a group and $Z(G)$ denotes the center of group $G.$ For any $g\in G,~~C_G(g)$ denotes the centralizer of $g$ in $G.$
\begin{lemma}\label{centralizer_size}
Let $G$ be a group such that $|G|=p^n$ and $|Z(G)|=p^{n-2}.$ If $g\in G\backslash  Z(G),$ then $|C_G(g)|=p^{n-1}.$ Moreover $C_G(g)=\langle g, Z(G) \rangle.$
\end{lemma}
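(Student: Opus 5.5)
We first show that $\langle g, Z(G)\rangle$ has order exactly $p^{n-1}$, and then squeeze $C_G(g)$ between $\langle g, Z(G)\rangle$ and $G$ using Lagrange's theorem.

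Fix $g\in G\setminus Z(G)$ and put $H=\langle g, Z(G)\rangle$. Since $Z(G)$ is central, $H=\langle g\rangle Z(G)$ is abelian. Every element of $H$ commutes with $g$, so $H\subseteq C_G(g)$. The element $g$ is not central, so $C_G(g)\neq G$. Since $|G|=p^n$, Lagrange's theorem then gives $|C_G(g)|\le p^{n-1}$.

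Next we bound $|H|$ from below. Because $g\notin Z(G)$, the subgroup $Z(G)$ is a proper subgroup of $H$, so $|H|\ge p^{n-1}$. One can also see this by noting that $|H|=|\langle g\rangle||Z(G)|/|\langle g\rangle\cap Z(G)|$ is a power of $p$ strictly larger than $p^{n-2}$.

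Combining the two steps,
\[
p^{n-1}\le |H|\le |C_G(g)|\le p^{n-1},
\]
so all of these are equalities. Hence $|C_G(g)|=p^{n-1}$, and since $H\subseteq C_G(g)$ with equal orders, $C_G(g)=H=\langle g, Z(G)\rangle$.

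No step here is a real obstacle. The only point needing care is that $C_G(g)$ is a proper subgroup and properly contains $Z(G)$, and both facts follow directly from $g\notin Z(G)$. We do not need the general fact that $G/Z(G)$ cannot be cyclic.
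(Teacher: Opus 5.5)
Your proof is correct and is essentially the paper's argument: both apply Lagrange's theorem to the chain $Z(G)\subsetneq \langle g,Z(G)\rangle\subseteq C_G(g)\subsetneq G$ inside a group of order $p^n$, which forces the two middle subgroups to have order exactly $p^{n-1}$. The only difference is presentational: the paper first pins down $|C_G(g)|$ from $Z(G)\subsetneq C_G(g)\subsetneq G$ and then repeats the argument for $\langle g,Z(G)\rangle$, while you do it in a single squeeze.
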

\begin{proof}
    Clearly $Z(G) \subset C_G(g) \subset G$ for all $g \in G.$ In this case $Z(G)\neq C_G(g)$ and $C_G(g)\neq G$ as $g \in C_G(g)$ and $g\notin Z(G).$ Also as $C_G(g)$ is a subgroup of $G,$ $|C_G(g)|$ divides $p^n.$ This forces $C_G(g)=p^{n-1}.$

    Now notice that $\langle g, Z(G) \rangle \subseteq C_G(g).$ By same arguments as above, we get $|\langle g, Z(G) \rangle|=p^{n-1}$ and hence $C_G(g)=\langle g, Z(G) \rangle.$
\end{proof}

\begin{lemma}\label{centralizer_intersection}
Let $G$ be a group such that $|G|=p^n$ and $|Z(G)|=p^{n-2}.$ If $g_1, g_2\in G\backslash Z(G),$ then either $C_G(g_1)=C_G(g_2)$ or $C_G(g_1)\cap C_G(g_2)=Z(G).$
\end{lemma}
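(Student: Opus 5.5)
The plan is to use Lemma \ref{centralizer_size}: for $g_1, g_2 \in G\setminus Z(G)$ we have $|C_G(g_i)|=p^{n-1}$ and $C_G(g_i)=\langle g_i, Z(G)\rangle$. The intersection $H=C_G(g_1)\cap C_G(g_2)$ is a subgroup with $Z(G)\subseteq H\subseteq C_G(g_1)$. Since $|Z(G)|=p^{n-2}$ and $|C_G(g_1)|=p^{n-1}$, Lagrange's theorem forces $|H|\in\{p^{n-2},p^{n-1}\}$.

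If $|H|=p^{n-2}$, then $H=Z(G)$, because $H$ contains $Z(G)$ and they have the same size. If $|H|=p^{n-1}$, then $H=C_G(g_1)$ by size, and likewise $H\subseteq C_G(g_2)$ with $|C_G(g_2)|=p^{n-1}$ gives $H=C_G(g_2)$. Hence $C_G(g_1)=C_G(g_2)$ in this case.

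This is a two-case counting argument with no real obstacle. The only point to check is that $|C_G(g_1)|/|Z(G)|=p$ is prime, so there is no subgroup strictly between $Z(G)$ and $C_G(g_1)$; this follows directly from Lemma \ref{centralizer_size} together with Lagrange's theorem.

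Alternatively, one could argue through the description $C_G(g_i)=\langle g_i,Z(G)\rangle$. If some $x\in H\setminus Z(G)$ exists, then $C_G(x)$ contains both $\langle g_1,Z(G)\rangle$ and $\langle g_2,Z(G)\rangle$, since $x$ commutes with $g_1$, $g_2$ and $Z(G)$. All three subgroups have order $p^{n-1}$, so $C_G(g_1)=C_G(x)=C_G(g_2)$. I will present the counting version and mention this as a remark.
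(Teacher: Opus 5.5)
Your proof is correct and is essentially the paper's argument: both rest on the fact from Lemma \ref{centralizer_size} that $Z(G)$ has index $p$ in each $C_G(g_i)$, so no subgroup lies strictly between them. The paper picks $h\in (C_G(g_1)\cap C_G(g_2))\setminus Z(G)$ and notes that $hZ(G)$ generates the order-$p$ quotient $C_G(g_i)/Z(G)$, giving $C_G(g_1)=\langle h,Z(G)\rangle=C_G(g_2)$, whereas you apply Lagrange to the intersection directly, which is only a cosmetic difference.
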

\begin{proof}
We know that $Z(G) \subseteq C_G(g_1) \cap C_G(g_2)$ for all $g_1, g_2 \in G.$ Let $h \in (C_G(g_1) \cap C_G(g_2))\backslash Z(G).$ This implies $hZ(G) \in \frac{C_G(g_i)}{Z(G)}$ for $i=1,2.$ Now using Lemma \ref{centralizer_size}, we get $|\frac{C_G(g_i)}{Z(G)}|=p, $ for $i=1,2.$ As $hZ(G)\neq Z(G), $ we get $\frac{C_G(g_i)}{Z(G)}=\langle hZ(G) \rangle$ for $i=1,2.$ Hence $C_G(g_i)=\langle h, Z(G)\rangle $ for $i=1,2.$
\end{proof}

Let $G$ be a $p$-group of size $p^{n}$ and $|Z(G)|=p^{n-2}.$
We partition the group $G$ into two classes as follows:

\begin{enumerate}
    \item $\Omega_1 =Z(G).$ As mentioned above $|Z(G)|=p^{n-2}.$
    \item $\Omega_2 =G\backslash Z(G).$ The size of $\Omega_2 =p^n-p^{n-2}.$ It follows from Lemma \ref{centralizer_size}, $|C_G(g)|=p^{n-1}$ for all $g\in \Omega_2.$ Also it follows from Lemma \ref{centralizer_intersection}, the class $\Omega_2$ of $p^n-p^{n-2}$ elements is partitioned into $p+1$ blocks of $p^{n-1}-p^{n-2}$ elements. We denote these blocks by $B_1, B_2, \dots,B_{p+1}.$ Further, it follows from Lemma \ref{centralizer_intersection}, no element of block $B_i$  commutes with any element of block $B_j$ if $i\neq j.$ 
    \end{enumerate}
    We compute $|G|=|\Omega_1|+|\Omega_2|=p^{n-2}+(p+1)(p^{n-1}-p^{n-2})=p^n.$
\section{Commuting graph and its properties}\label{Section.commuting}
In this section, we first compute the commuting graph of group $G$ of size $p^n$ with center $Z(G)$ of size $p^{n-2},$ for an odd prime number $p.$ We also investigate some properties of this graph. We recall the commuting graph $\zeta(G,V)$ of $G$ with vertex set $V \subseteq G,$ and two vertices are connected with an edge if the corresponding elements of group commute with each other. We denote the complete graph on $n$ vertices with $K_n.$ The join of two graphs $\Gamma_1(V_1,E_1)$ and $\Gamma_2(V_2,E_2)$ with disjoint vertex sets, is defined as the graph obtained by connecting all the vertices of $\Gamma_1$ to all the vertices of $\Gamma_2$ and it is denoted by $\Gamma_1\vee\Gamma_2.$

\begin{proposition} \label{structure of graph}
 Using same notations as the previous section,  if $V$ is a subset of $G,$ then 

  $$\zeta(G,V) =\begin{cases}
      K_{p^{n-2}}, & V=\Omega_1,\\
      K_{p^{n-1}-p^{n-2}}, & V=B_k, (1\leq k\leq p+1).
  \end{cases}$$

  Moreover $\zeta(G,G)=K_{p^{n-2}} \vee (p+1).K_{p^{n-1}-p^{n-2}}.$
\end{proposition}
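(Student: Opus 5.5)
The plan is to read off the edge structure of $\zeta(G,G)$ directly from the partition of Section \ref{section.partition}, using Lemmas \ref{centralizer_size} and \ref{centralizer_intersection}. First, for $V=\Omega_1=Z(G)$: any two elements of $Z(G)$ commute, since each commutes with everything. So every pair of distinct vertices is adjacent, and $\zeta(G,\Omega_1)\cong K_{p^{n-2}}$.

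Next, for $V=B_k$, I will make precise what a block is. For $g\in\Omega_2$, set $B(g)=C_G(g)\setminus Z(G)$. By Lemma \ref{centralizer_size}, $|B(g)|=p^{n-1}-p^{n-2}$. If $h\in B(g)$, then $h\in C_G(g)\cap C_G(h)$ and $h\notin Z(G)$. Lemma \ref{centralizer_intersection} then forces $C_G(h)=C_G(g)$, so $B(h)=B(g)$. Hence the sets $B(g)$ partition $\Omega_2$, and these are the blocks $B_1,\dots,B_{p+1}$. Their number is $(p^n-p^{n-2})/(p^{n-1}-p^{n-2})=p+1$.

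Now take $x,y\in B_k$. They share a centralizer $C=C_G(x)=\langle x,Z(G)\rangle$, and $y\in C$, so $y$ commutes with $x$. Thus $\zeta(G,B_k)\cong K_{p^{n-1}-p^{n-2}}$.

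For the global structure, there are three types of pairs.
\begin{enumerate}
\item A central element commutes with every element of $G$. So every vertex of $\Omega_1$ is adjacent to every vertex of $\Omega_2=\bigcup_k B_k$, which gives the join.
\item Elements in the same block are adjacent, as shown above.
\item Take $x\in B_i$ and $y\in B_j$ with $i\neq j$, and suppose $xy=yx$. Then $y\in C_G(x)\setminus Z(G)=B_i$, contradicting the disjointness of the blocks. So there are no edges between distinct blocks.
\end{enumerate}

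The induced graph on $\Omega_2$ is therefore the disjoint union $(p+1).K_{p^{n-1}-p^{n-2}}$. Joining it with the complete graph on $\Omega_1$ yields $\zeta(G,G)=K_{p^{n-2}}\vee (p+1).K_{p^{n-1}-p^{n-2}}$.

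The only delicate step is justifying that the blocks are exactly the sets $C_G(g)\setminus Z(G)$, so that "same block" is equivalent to "commute" for non-central elements. This relies on Lemma \ref{centralizer_intersection} together with $g\in C_G(g)$. Everything else is immediate.
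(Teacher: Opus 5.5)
Your proof is correct and follows the same route as the paper. Both arguments get completeness of $\Omega_1$ from the center, rule out edges between distinct blocks via Lemma \ref{centralizer_intersection}, and obtain the join because central elements commute with everything; you are more careful than the paper in identifying the blocks explicitly as $C_G(g)\setminus Z(G)$ and checking that each block is complete, both of which the paper leaves implicit (and your final formula correctly reads $K_{p^{n-2}}$, where the paper's proof has the typo $K_{p^{n-1}}$).
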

\begin{proof}
    Using Lemma \ref{centralizer_intersection}, we get there is no edge connecting any two vertices belonging to distinct blocks $B_k.$ Recall $\Omega_2=G\setminus Z(G),$ therefore 
    $$\zeta(G, \Omega_2)= \bigcup_{k=1}^{p+1} \zeta (G, B_k)= (p+1).K_{p^{n-1}-p^{n-2}}.$$
    As $\Omega_1$ is the center of group $G,$ all the vertices of set $\Omega_1$ are adjacent to all the vertices in $\Omega_1$ as well as to all the vertices in $\Omega_2.$ This implies $\zeta(G,G)=K_{p^{n-1}} \vee (p+1).K_{p^{n-1}-p^{n-2}}.$
 \end{proof}
 \begin{corollary}\label{degree_of_vertices}
    With same notations as above,  let $u$ be a vertex of graph $\zeta(G,G).$ Then
    $$\deg{u} =\begin{cases}
    p^{n}-1, & u \in \Omega_1,\\
    p^{n-1}-1, & u \in \Omega_2.
    \end{cases}$$
 \end{corollary}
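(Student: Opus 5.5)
The plan is to read the degrees directly off the structure $\zeta(G,G)=K_{p^{n-2}}\vee (p+1).K_{p^{n-1}-p^{n-2}}$ from Proposition \ref{structure of graph}. One could equivalently argue group-theoretically: the neighbours of $u$ are the elements of $C_G(u)\setminus\{u\}$, so $\deg u=|C_G(u)|-1$. Both viewpoints give the same count, and I will use the centralizer description as the primary argument, with the graph decomposition as a cross-check.

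\emph{Central vertices.} For $u\in\Omega_1=Z(G)$ we have $C_G(u)=G$. Hence $u$ is adjacent to every other element of $G$, and
\[
\deg u=|G|-1=p^n-1.
\]
In the graph picture, $u$ lies in the clique $K_{p^{n-2}}$ and is joined to all $(p+1)(p^{n-1}-p^{n-2})$ vertices of $\Omega_2$. This gives
\[
\deg u=(p^{n-2}-1)+(p+1)(p^{n-1}-p^{n-2})=p^n-1,
\]
using the count $|G|=p^{n-2}+(p+1)(p^{n-1}-p^{n-2})$ established in Section \ref{section.partition}.

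\emph{Non-central vertices.} For $u\in\Omega_2$, Lemma \ref{centralizer_size} gives $|C_G(u)|=p^{n-1}$, so
\[
\deg u=p^{n-1}-1.
\]
As a cross-check, $u$ lies in some block $B_k$. By Lemma \ref{centralizer_intersection}, $C_G(u)=Z(G)\cup B_k$, so $u$ is adjacent to the $p^{n-2}$ central elements and to the other $p^{n-1}-p^{n-2}-1$ vertices of its block. It has no neighbours in the other blocks. The total is again $p^{n-1}-1$.

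There is no real obstacle here. The only point needing care is to count neighbours with the vertex $u$ itself excluded (no loops). A secondary caution is that the final line of the proof of Proposition \ref{structure of graph} writes $K_{p^{n-1}}$ where $K_{p^{n-2}}$ is meant. I will rely on the statement of the proposition, or directly on Lemma \ref{centralizer_size}, to avoid that typo.
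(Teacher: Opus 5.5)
Your proposal is correct and takes essentially the paper's approach. The paper gives no separate proof and reads the degrees off $\zeta(G,G)=K_{p^{n-2}}\vee (p+1).K_{p^{n-1}-p^{n-2}}$, which is your cross-check, while your primary count $\deg u=|C_G(u)|-1$ with $|C_G(u)|=p^{n-1}$ from Lemma~\ref{centralizer_size} is the same fact one step earlier. You are also right that $K_{p^{n-1}}$ in the last line of the proposition's proof is a typo for $K_{p^{n-2}}$.
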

 \begin{corollary}
     The number of edges in graph $\zeta(G,G)$ is given by
     $$\displaystyle E(\zeta(G,G))= \frac{p^{2n-1}+p^{2n-2}-p^{2n-3}-p^n}{2}$$ 
 \end{corollary}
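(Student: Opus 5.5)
The plan is to apply the handshake lemma: the number of edges of $\zeta(G,G)$ equals half the sum of the vertex degrees. The degrees are already known from Corollary \ref{degree_of_vertices}, and the class sizes from the partition in Section \ref{section.partition}. Throughout, $\zeta(G,G)$ is treated as a simple graph with no loops, which is the convention under which Corollary \ref{degree_of_vertices} gives $\deg u = p^n-1$ for central $u$.

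First, split the degree sum according to the partition $G=\Omega_1\sqcup\Omega_2$. There are $|\Omega_1|=p^{n-2}$ vertices of degree $p^n-1$. There are $|\Omega_2|=p^n-p^{n-2}$ vertices of degree $p^{n-1}-1$. Hence
\[
2|E(\zeta(G,G))| = p^{n-2}(p^n-1) + (p^n-p^{n-2})(p^{n-1}-1).
\]
Second, expand:
\[
p^{n-2}(p^n-1) = p^{2n-2}-p^{n-2}, \qquad (p^n-p^{n-2})(p^{n-1}-1) = p^{2n-1}-p^{n}-p^{2n-3}+p^{n-2}.
\]
The $\pm p^{n-2}$ terms cancel, so the sum is $p^{2n-1}+p^{2n-2}-p^{2n-3}-p^n$. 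Dividing by $2$ gives the stated formula.

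As a cross-check, count directly from Proposition \ref{structure of graph}, whose statement gives $\zeta(G,G)=K_{p^{n-2}}\vee (p+1).K_{p^{n-1}-p^{n-2}}$. The edge count is then the sum of three terms:
\[
\binom{p^{n-2}}{2} \;+\; (p+1)\binom{p^{n-1}-p^{n-2}}{2} \;+\; p^{n-2}\,(p^n-p^{n-2}).
\]
These are, respectively, the edges inside the center, the edges inside the blocks, and the join edges. This should simplify to the same expression.

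The argument has no real obstacle. The only care needed is bookkeeping: the $p^{n-2}$ cancellation, and the fact that each central vertex is adjacent to all other $p^n-1$ vertices. The latter depends on excluding loops.
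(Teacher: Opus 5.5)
Your proposal is correct and matches the paper's proof: both apply the Handshaking Lemma to the degrees from Corollary~\ref{degree_of_vertices}. Your expansion, including the cancellation of the $\pm p^{n-2}$ terms, supplies the arithmetic the paper omits, and your direct count via the join decomposition is a valid extra check that gives the same expression.
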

 \begin{proof}
     Using the Handshaking Lemma, we see that twice the number of edges in a graph is the same as sum of degrees of all the vertices. We use the corollary \ref{degree_of_vertices}, to count the number of edges in the graph $\zeta(G,G).$ 
 \end{proof}
 The chromatic number of a graph is the minimum number of colors required to color all the vertices such that no two adjacent vertices have the same color. 

 \begin{proposition}
     The chromatic number of $\zeta(G,G)$ is $p^{n-1}.$
 \end{proposition}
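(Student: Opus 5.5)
The plan is to prove a lower bound of $p^{n-1}$ from a clique and then give an explicit proper colouring with exactly $p^{n-1}$ colours. Throughout I use the structure $\zeta(G,G)=K_{p^{n-2}} \vee (p+1).K_{p^{n-1}-p^{n-2}}$ from Proposition \ref{structure of graph}.

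\textbf{Lower bound.} Fix any block $B_k$ and consider $\Omega_1\cup B_k$. This set equals $C_G(g)$ for any $g\in B_k$, by Lemma \ref{centralizer_size} together with the construction of the blocks via Lemma \ref{centralizer_intersection}. The vertices of $\Omega_1$ are central, so they are adjacent to everything. The vertices of $B_k$ are pairwise adjacent, since $\zeta(G,B_k)$ is complete. Hence $\Omega_1\cup B_k$ induces a clique on
$$p^{n-2}+(p^{n-1}-p^{n-2})=p^{n-1}$$
vertices. It follows that $\chi(\zeta(G,G))\ge p^{n-1}$.

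\textbf{Upper bound.} Give the $p^{n-2}$ vertices of $\Omega_1$ the distinct colours $1,\dots,p^{n-2}$. For each $k$, $1\le k\le p+1$, colour the $p^{n-1}-p^{n-2}$ vertices of $B_k$ bijectively with the colours $p^{n-2}+1,\dots,p^{n-1}$, reusing the same palette in every block.

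We check this colouring is proper by going through the possible edges:
\begin{itemize}
\item Two vertices of $\Omega_1$ receive different colours.
\item A vertex of $\Omega_1$ and a vertex of $\Omega_2$ use disjoint palettes.
\item Two vertices in the same block $B_k$ receive different colours, since the colouring within a block is a bijection.
\item Vertices in distinct blocks $B_i$, $B_j$ are never adjacent, by Lemma \ref{centralizer_intersection}, so sharing a colour there is harmless.
\end{itemize}
Thus $p^{n-1}$ colours suffice, and combined with the lower bound, $\chi(\zeta(G,G))=p^{n-1}$.

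There is no real obstacle here. The only point needing care is that no edges join distinct blocks; this is exactly Lemma \ref{centralizer_intersection}, since elements from different blocks have centralizers meeting only in $Z(G)$, so they cannot commute.
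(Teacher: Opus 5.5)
Your proposal is correct and takes essentially the same approach as the paper, which also colours $\Omega_1$ with $p^{n-2}$ distinct colours and reuses a single palette of $p^{n-1}-p^{n-2}$ further colours in every block $B_k$. The one difference is that you state the lower bound explicitly through the clique $\Omega_1\cup B_k$ of size $p^{n-1}$, which the paper leaves implicit when it says these colours are ``needed''.
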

 \begin{proof}
 We first color the vertices of $\Omega_1.$ As $\zeta(G, \Omega_1)$ is a complete graph $K_{p^{n-2}}.$ We need $p^{n-2}$ colors to color the graph $\zeta(G, \Omega_1).$ The coloring of $\zeta(G, \Omega_2)$ is done block-wise, where each block $B_k$ for $1\leq k \leq p+1$ has $p^{n-1}-p^{n-2}$ vertices. All the vertices in block $B_1$ are adjacent to all the vertices in $\Omega_1,$ so we need $p^{n-1}-p^{n-2}$ more colors, different from the colors used to color $\zeta(G, \Omega_1).$ As no two distinct blocks share a common edge, the same colors as used to color $B_1$ can be used to color all blocks $B_k$ for all $2\leq k\leq p+1.$ Hence, the chromatic number $\zeta(G,G)$ is $p^{n-2}+(p^{n-1}-p^{n-2})=p^{n-1}.$
 \end{proof} 
\section{Detour distance of the commuting graph }\label{section.Detour}
Let $\Gamma(V, E)$ be a connected graph and let $u,w\in V.$ The detour distance $D(u,w) $ between 
$u$ and $w$ is defined as the length of the longest path between them. The detour eccentricity $ecc_D(u)$ of a vertex $u\in V$ is defined as $max\{D(u,w) ~|~ w\in V\}.$
Further, the detour radius $rad_d(G)$ of the graph $\Gamma(V, E)$ is defined as $min \{ ecc_D(u) ~|~ u \in V\}, $ the detour diameter $diam_d(G)$ is defined as $max \{ ecc_D(u) ~|~ u \in V\}.$
\begin{theorem}
Let $u$ be a vertex of graph $\zeta(G,G).$ Then the detour eccentricity is as given below:
$$ \displaystyle  ecc_D(u)=\begin{cases}
 p^3-p^2+p-1 & \text{ when } n=3 \text{ and } u\in \Omega_1,\\
 p^n-1 & \text{ otherwise. }   
\end{cases}$$
Moreover $rad_D(\zeta(G,G))=p^3-p^2+p-1$ and $diam_D(\zeta(G,G))=p^3-1$ if $n=3.$ For all $n>3,$  we have $rad_D(\zeta(G,G))=diam_D(\zeta(G,G))=p^n-1.$
\end{theorem}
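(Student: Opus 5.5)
The plan is to work directly with the structure $\zeta(G,G)=K_{p^{n-2}} \vee (p+1).K_{p^{n-1}-p^{n-2}}$ from Proposition \ref{structure of graph}. Write $m=p^{n-2}=|\Omega_1|$ and $b=p^{n-1}-p^{n-2}=|B_k|$; the graph has $N=p^n$ vertices. The trivial upper bound $D(u,w)\le N-1$ holds for every pair. So $ecc_D(u)=p^n-1$ follows as soon as we find some $w$ and a Hamiltonian path from $u$ to $w$.

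The key structural observation is this. Vertices of distinct blocks $B_i, B_j$ are never adjacent (Lemma \ref{centralizer_intersection}), and every vertex of $\Omega_1$ is adjacent to everything. Hence in any path, consecutive visits to two different blocks must be separated by at least one vertex of $\Omega_1$ lying in the interior of the path. Conversely, inside a block or inside $\Omega_1$ we may order vertices arbitrarily, since these are cliques.

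For the constructions, first take $u\in B_1$ (any $n\ge 3$). Traverse all of $B_1$ starting at $u$, then a centre vertex $c_1$, then all of $B_2$, then $c_2$, and so on, up to all of $B_{p+1}$. This uses $p$ connector centre vertices, which is possible because $m\ge p$. Any leftover centre vertices (when $n>3$) can be inserted next to a connector, since $\Omega_1$ is a clique adjacent to all vertices. The path ends at some $w\in B_{p+1}$ and is Hamiltonian, so $ecc_D(u)=p^n-1$.

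Next take $u\in\Omega_1$ with $n\ge 4$. Start at $u$ and run the same block-and-connector pattern using $p$ further centre vertices. This needs $m-1\ge p$, which holds since $m=p^{n-2}\ge p^2>p+1$. Absorb the remaining centre vertices as before; again $ecc_D(u)=p^n-1$.

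The main obstacle is the case $n=3$ with $u\in\Omega_1$, where $m=p$. The endpoint $u$ cannot serve as an interior connector, so only $p-1$ centre vertices are available as separators. If a path visits $t$ distinct blocks, then each block is visited in a contiguous run, because re-entering a block also costs a connector. The path therefore needs at least $t-1$ interior centre vertices, giving $t\le p$, so at least one whole block of size $b=p^2-p$ is missed. This yields $D(u,w)\le p^3-(p^2-p)-1=p^3-p^2+p-1$. The bound is attained by the path $u, B_1, c_2, B_2,\dots,c_p,B_p$, which has $p+p(p^2-p)$ vertices. I would phrase the argument by counting maximal runs of block vertices along the path, to make it rigorous.

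Finally, the radius and diameter follow by taking the minimum and maximum of these eccentricities. For $n=3$, centre vertices give $p^3-p^2+p-1<p^3-1$. For $n>3$ every vertex has eccentricity $p^n-1$.
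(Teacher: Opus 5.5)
Your proposal is correct and follows essentially the same route as the paper. The block-and-connector traversals give Hamiltonian paths in every case except $n=3$, $u\in\Omega_1$, where the path $u,B_1,c_2,B_2,\dots,c_p,B_p$ attains $p^3-p^2+p-1$. Your run-counting argument for the matching upper bound is more rigorous than the paper, which only remarks that no central vertex is left to return to: each maximal run of non-central vertices lies in a single block and is immediately preceded by a distinct central vertex, so at most $p$ blocks are touched. Phrase it through maximal runs rather than asserting that each block is visited contiguously, since a path may re-enter a block.
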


\begin{proof}
    
    We first assume that $n=3.$ In this case, $|\Omega_1|=p$ is less than the number of blocks, which is $p+1.$ Using notations of Prop. \ref{structure of graph}, let $\Omega_1= \{ u_1, u_2, \dots, u_p\}$  and let $B_1, B_2 \dots, B_{p+1}$ be distinct blocks.
    Without loss of generality,  we choose $u_1\in \Omega_1.$  To compute $ecc_D(u_1),$ we begin from $u_1,$ and traverse each vertex in the block $B_1$ without repetition and come back in $\Omega_1$ at a vertex $u_2.$ We can repeat this process $(p-1)$ times. After traversing the vertices of the $p^{th}$ block $B_p$, we do not have any vertex left in $\Omega_1$ to come back in $\Omega_1.$ This implies that the detour eccentricity of $u$ is 
    $(p-1)(p^2-p+1)+(p^2-p)=p^3-p^2+p-1.$ 
    
    Next, without loss of generality, suppose we choose $u\in B_1.$  To compute $ecc_D(u),$ we first traverse each vertex in block $B_1$ without repetition, then we go to vertex $u_1$ in $\Omega_1.$ Now we go to block $B_2.$ We repeat this process until we have traversed all the blocks. Note that in this case, we traverse each vertex of the graph. Hence we compute that the detour eccentricity in this case is 
    $p(p^2-p+1)+(p^2-p)=p^3-1.$
    
    Now we assume that $n>3.$ In this case, $|\Omega_1|=p^{n-2}$ is greater than the number of blocks, which is $p+1.$ Again without loss of generality, suppose we choose $u_1\in \Omega_1.$  To compute $ecc_D(u_1),$ we begin from $u_1,$ and traverse each vertex in the block $B_1$ without repetition and come back in $\Omega_1$ at  vertex $u_2.$ We can repeat this process until we have traversed all the blocks. After traversing all blocks, we can traverse the remaining elements in the set $\Omega_1.$ Similarly, if we start from $u\in B_k,$ for some $1\leq k\leq p+1,$ we can traverse all the vertices of the graph in this case as well. Hence, in this case, we get that the detour eccentricity is $p^n-1$ for all vertices of the graph.

    The detour radius and the detour diameter directly follow from the calculation of the detour eccentricity of all the vertices of the graph.
\end{proof}

For a graph $\Gamma(V,E),$ the clique is a subset of vertices in which every pair of vertices is connected by an edge. Thus, induced subgraph on this subset of vertices is a complete graph. The clique number of a graph $\Gamma(V,E)$ is defined as the number of vertices in the maximum clique in $\Gamma(V,E).$
\begin{theorem} 
The clique number of the graph $\zeta(G,G)$ is $p^{n-1}.$
\end{theorem}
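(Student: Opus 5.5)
The plan is to prove matching lower and upper bounds, using the decomposition $\zeta(G,G)=K_{p^{n-2}} \vee (p+1).K_{p^{n-1}-p^{n-2}}$ from Proposition \ref{structure of graph}.

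For the lower bound, I will exhibit an explicit clique. Take any non-central $g$ and the subgroup $C_G(g)=\langle g, Z(G)\rangle$, which by Lemma \ref{centralizer_size} has order $p^{n-1}$. This subgroup is abelian, because it is generated by $g$ together with central elements. Hence every two of its elements commute, and it is a clique of size $p^{n-1}$. In the language of the partition, $C_G(g)=\Omega_1\cup B_k$ for the block $B_k$ containing $g$. Its size is $p^{n-2}+(p^{n-1}-p^{n-2})=p^{n-1}$.

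For the upper bound, let $S$ be any clique. I claim that $S\setminus\Omega_1$ lies inside a single block. Indeed, if $S$ contained $x\in B_i$ and $y\in B_j$ with $i\neq j$, then $x$ and $y$ would have to be adjacent. But by Lemma \ref{centralizer_intersection} and Proposition \ref{structure of graph}, there are no edges between distinct blocks. Therefore $S\subseteq \Omega_1\cup B_k$ for some $k$, and so $|S|\le p^{n-1}$.

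Combining the two bounds gives a clique number of exactly $p^{n-1}$. The only step requiring care is the upper bound, namely justifying that vertices from distinct blocks never commute. That is precisely the content of Lemma \ref{centralizer_intersection}, which I take as given, so I do not expect a real obstacle.
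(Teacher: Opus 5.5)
Your proof is correct and follows the same route as the paper, which reads the maximum clique $\Omega_1\cup B_k$ directly off the decomposition $K_{p^{n-2}}\vee (p+1).K_{p^{n-1}-p^{n-2}}$ and counts $p^{n-2}+(p^{n-1}-p^{n-2})=p^{n-1}$. Your version makes explicit the two bounds the paper leaves implicit: $\Omega_1\cup B_k=C_G(g)$ is abelian, and no clique can meet two distinct blocks.
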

\begin{proof}
    The given graph is $\zeta(G,G) =K_{p^{n-1}} \vee (p+1).K_{p^{n-1}-p^{n-2}}.$ The subgraph $K_{p^{n-2}}$ is defined on the central elements of the groups, while the $(p+1)$ subgraphs $K_{p^{n-1}-p^{n-2}}$ are defined on the sets containing non-central elements of distinct centralizers. Therefore, the maximum clique is formed on elements of the set $\Omega_1 \cup B_k$ for some $k, ~ 1\leq k \leq p+1.$ This implies that the clique number of the graph $\zeta(G,G)$ is $ p^{n-2}+(p^{n-1}-p^{n-2})=p^{n-1}.$
\end{proof}

The conceptual opposite of a clique is an independent set. For a graph $\Gamma(V,E),$ the independent set is a subset of vertices in which no two of vertices are connected with an edge. The number of vertices in the maximum independent set of a graph is known as its independence number.

\begin{theorem}
The independence number of the graph $\zeta(G,G)$ is $p+1.$
\end{theorem}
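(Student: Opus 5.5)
We prove the statement by giving an independent set of size $p+1$ and then showing that no independent set can be larger. Both steps use only the structure of $\zeta(G,G)$ from Proposition \ref{structure of graph}. Recall that $\zeta(G,G)=K_{p^{n-2}}\vee (p+1).K_{p^{n-1}-p^{n-2}}$. Here the vertices of $\Omega_1=Z(G)$ are adjacent to every other vertex, each block $B_k$ induces a complete graph, and by Lemma \ref{centralizer_intersection} there are no edges between distinct blocks.

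For the lower bound, pick one element $g_k\in B_k$ for each $k$ with $1\leq k\leq p+1$. Each block is nonempty, since $|B_k|=p^{n-1}-p^{n-2}>0$. The elements $g_k$ lie in pairwise distinct blocks, so no two of them commute. Hence $\{g_1,\dots,g_{p+1}\}$ is an independent set of size $p+1$.

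For the upper bound, let $S$ be any independent set in $\zeta(G,G)$. We argue in two cases.
\begin{enumerate}
\item If $S$ contains a vertex $z\in\Omega_1$, then $z$ is adjacent to every other vertex of the graph, since it commutes with all of $G$. So $S=\{z\}$, and $|S|=1\leq p+1$.
\item Otherwise $S\subseteq\Omega_2=\bigcup_{k=1}^{p+1}B_k$. Each $B_k$ induces a complete graph, so $|S\cap B_k|\leq 1$ for every $k$. Summing over the $p+1$ blocks gives $|S|\leq p+1$.
\end{enumerate}

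Combining the two bounds, the independence number of $\zeta(G,G)$ is $p+1$. There is no real obstacle in this argument. The only point that needs care is the case of a central vertex: the bound there relies on central elements being adjacent to every vertex, including those in $\Omega_1$ itself, and this holds because commuting graphs have no loops and any two distinct elements of $G$ that commute are joined by an edge.
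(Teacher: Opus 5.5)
Your proof is correct and follows essentially the same route as the paper: rule out central vertices, observe that each block (the non-central part of an abelian centralizer) is a clique and so contributes at most one vertex, and use the $p+1$ pairwise non-adjacent blocks for the lower bound. Your two-case upper bound is slightly more careful than the paper's claim that no central element can lie in an independent set: a single central vertex does form an independent set, just one of size $1<p+1$.
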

\begin{proof}
    Since elements in the center of any group are connected with all other elements of the group, so no element from the center belongs to the independent set of the commuting graph. Since centralizers of $p-$ groups with order $p^n$ and center size $p^{n-2}$ are abelian, therefore all  elements in the same centralizer also commute with each other, so we can choose only one element from each centralizer. Since there are a total $p+1$ distinct centralizers, taking one non-central element from each centralizer to form maximum independent set, we find that the independence number of the graph $\zeta(G,G)$ is $p+1.$ 
\end{proof}

\section{Resolving polynomial of the commuting graph}\label{section.polynomial}
Let $\Gamma(V, E)$ denote a graph with the set of vertices $V$ and the set of edges $E.$
For a vertex $v$ in a graph $\Gamma(V,E)$, the open neighborhood of $v$ is the set of all vertices adjacent to $v$ and it is denoted by $N(v).$ The closed neighborhood of $v$ includes $v$ itself along with all its neighbors and is denoted by $N[v],$ and hence $N[v]=N(v)~\cup~\{v\}.$ For a graph $\Gamma(V,E),$ a subset $T$ of $V$ is called a twin set if every two distinct vertices $u,v\in T$ satisfy $N[u]=N[v]$ or $N(u)=N(v),$ which means every pair of vertices in $T$ has exactly same neighborhood. \\

For a graph $\Gamma(V,E),$ a subset $S\subseteq V$ is called a resolving set of $\Gamma(V,E)$ if for every pair of distinct vertices $u,v\in V$ there exists at least one vertex $w\in S$ such that $d(u,w)\neq d(v,w),$ where $d(u,v)$ denotes the length of the shortest path between the vertices $u$ and 
$v$. Minimum cardinality of the resolving set is called metric dimension of graph $\Gamma(V,E)$ and it is denoted by $\beta(\Gamma).$ The resolving polynomial of $\Gamma(V,E)$ encodes the number of resolving sets of every possible cardinality defined as $\beta(\Gamma,x)=\sum\limits_{i=\beta(\Gamma)}^{n} s_ix^i,$ where $i$ is size of resolving set, $s_i$ is number of resolving set of that size and $n$ is the total number of vertices.\\

\begin{remark}
\cite[Remark 3.3]{Ali02062016} \label{5.1} 
   If $U$ is a twin set in a connected graph $\Gamma(V,E)$ of order $n$ with $|U|=l\geq 2,$ then every resolving set for $G$ contains at least $l-1$ vertices of $U.$
   \end{remark}
We recall that $\zeta(G,G)$ denotes the commuting graph of group $G$ of size $p^n$ with center of size $p^{n-2}.$
    \begin{theorem} \label{Metric_dimension of graph}
        The metric dimension of the graph $\Gamma=\zeta(G,G)$ is $p^n-(p+2).$ 
    \end{theorem}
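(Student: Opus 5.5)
The plan is to match a lower bound from twin sets with an explicit resolving set of the same size. By Proposition \ref{structure of graph}, $\Gamma=\zeta(G,G)=K_{p^{n-2}}\vee (p+1).K_{m}$ with $m=p^{n-1}-p^{n-2}$, and the vertex set is the disjoint union of $\Omega_1$ and the blocks $B_1,\dots,B_{p+1}$. We have $p^{n-2}+(p+1)m=p^n$. The target value $p^n-(p+2)$ is exactly what remains after deleting one vertex from each of these $p+2$ classes. Note also that $\Gamma$ is connected with diameter at most $2$, since every vertex is adjacent to $\Omega_1$. So all distances between distinct vertices are $1$ or $2$.

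\textbf{Lower bound.} I will check that each class is a twin set. For $u\in\Omega_1$ we have $N[u]=G$, so any two central vertices have equal closed neighbourhoods. For $u\in B_k$, Proposition \ref{structure of graph} gives $N[u]=\Omega_1\cup B_k$, the same for every vertex of $B_k$. Applying Remark \ref{5.1} to each class, every resolving set $S$ contains at least $|\Omega_1|-1$ vertices of $\Omega_1$ and at least $m-1$ vertices of each $B_k$. (When $|\Omega_1|=1$ the bound $0$ is trivial.) Since the classes are disjoint, summing gives $|S|\ge p^{n-2}+(p+1)m-(p+2)=p^n-(p+2)$.

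\textbf{Upper bound.} Fix $z\in\Omega_1$ and $b_k\in B_k$ for $1\le k\le p+1$, and let $S$ be the complement of $\{z,b_1,\dots,b_{p+1}\}$. Any pair involving a vertex of $S$ is resolved by that vertex itself, via distance $0$. So it suffices to separate the $p+2$ omitted vertices from one another.

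First record the distances from omitted vertices to $S$. The vertex $z$ is at distance $1$ from every vertex of $S$. The vertex $b_k$ is at distance $1$ from $S\cap(\Omega_1\cup B_k)$ and at distance $2$ from $S\cap B_j$ for $j\ne k$.

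The key point is that each $S\cap B_j$ is nonempty. This holds because $m-1=p^{n-2}(p-1)-1\ge 1$ for odd $p$ and $n\ge 2$. With this:
\begin{itemize}
\item to separate $b_k$ from $b_j$ ($j\neq k$), take any $w\in S\cap B_k$; then $d(b_k,w)=1$ while $d(b_j,w)=2$;
\item to separate $z$ from $b_k$, take any $w\in S\cap B_j$ with $j\ne k$; then $d(z,w)=1$ while $d(b_k,w)=2$.
\end{itemize}
Hence $S$ is a resolving set of size $p^n-(p+2)$, and the two bounds agree.

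The main point needing care is the twin-set verification. In particular, one must note that $\Omega_1\cup B_k$ is not a single twin class, because $N[z]=G\neq \Omega_1\cup B_k=N[b_k]$. The count of $p+2$ classes therefore cannot be improved, and the lower bound from Remark \ref{5.1} is sharp.
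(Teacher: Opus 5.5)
Your proposal is correct and follows the paper's proof essentially step for step. You use the same twin-set lower bound via Remark~\ref{5.1} applied to $\Omega_1,B_1,\dots,B_{p+1}$, and the same resolving set obtained by omitting one vertex from each class, checked with the same distance-$1$ versus distance-$2$ comparisons. Reducing to pairs of omitted vertices and explicitly checking that each $S\cap B_j$ is nonempty makes the paper's four-case verification a little cleaner, but the argument is the same.
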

\begin{proof}
    With same notations as Prop. \ref{structure of graph}, we have  $\zeta(G,G)=K_{p^{n-2}} \vee (p+1).K_{p^{n-1}-p^{n-2}}.$ Recall that $|\Omega_1|=p^{n-2}.$ Note that $N[u]=N[v]=G~\forall~u,v\in \Omega_1$ and there does not exist any $w\in G\setminus\Omega_1$ such that $N[w]=G.$ Hence $\Omega_1$ is a twin set in $\Gamma.$ Similarly each block $B_k,1\leq k \leq p+1$ is a twin set with $|B_k|=p^{n-1}-p^{n-2}.$ Hence all twin sets in graph $\Gamma=\zeta(G,G)$ are $\Omega_1,B
    _1,B_2,...,B_{p+1}.$\\
    Therefore by Remark \ref{5.1} , $\beta(\Gamma)\geq (p^{n-2}-1)+(p+1)(p^{n-1}-p^{n-2}-1)= p^n-(p+2).$\\
    As we have total $p+2$ twin sets. To construct a resolving set, we include all vertices from each twin set except exactly one.
    Let $S$ denotes the resolving set, and we define 
    \begin{align*}
       S\cap \Omega_1 &= \big\{v_1,v_2,...,v_{p^{n-2}-1}\big\},\\
    S\cap B_1 &=\big\{w_1^1,w_2^1,...,w_{p^{n-1}-p^{n-2}-1}^1\big\},\\
    \vdots\\
    S\cap B_{p+1} &=\big\{w_1^{p+1},w_2^{p+1},...,w_{p^{n-1}-p^{n-2}-1}^{p+1}\big\},
    \end{align*}
     where $v_i\in \Omega_1,w_k^j\in B_j~\text{for}~ 1\leq i\leq p^{n-2}-1,~ 1\leq j \leq p+1~ \text{and}~ 1\leq k\leq p^{n-1}-p^{n-2}-1.$
    
    Hence $|S|=(p^{n-2}-1)+(p+1)(p^{n-1}-p^{n-2}-1)=p^n-(p+2).$ Now, to resolve any pair $(a,b)\in \zeta(G,G),$ with $a\neq b.$ We have four cases:
    \begin{enumerate}
\renewcommand{\labelenumi}{(\roman{enumi})}
\item $a,b\in\Omega_1$ 
\item $a\in \Omega_1, b\in B_k$
\item $a,b\in B_i$
\item $a\in B_i,b\in B_j, ~ i\neq j.$
\end{enumerate}
In the first and the third case, note that at least one of $a$ and $b$ belongs to the resolving set, now  we use $d(a,b)=1, d(a,a)=d(b,b)=0.$ In the second case, for some $j\neq k,$ we have $d(a,\omega_l^j)=1,~d(\omega_l^j,b)=2.$ In the fourth case, as $i\neq j,$ we have $d(a,\omega_l^i)=1,~d(b,\omega_l^i)=2.$ This implies $S$ is a resolving set of size $p^n-(p+2).$ Therefore, $\beta(\Gamma)\leq|S|=p^n-(p+2).$ 
    
\end{proof}

\begin{theorem}
    Let $\Gamma=\zeta(G,G),$ then the resolving polynomial of $\Gamma$ is $$\beta(\Gamma,x)=x^{p^n-(p+2)}[p^{n-2}(p^{n-1}-p^{n-2})^{p+1}+x(p^{n-1}-p^{n-2})^{p}2p^{n-1}]+\sum\limits_{i=p^n-p}^{p^n}s_ix^i,$$ where $s_i={}^{p+1}C_{i-p^n+(p+2)}(p^{n-1}-p^{n-2})^{p^n-i}+(p^{n-2}){}^{p+1}C_{i-p^n+(p+2)}(p^{n-1}-p^{n-2})^{p^n-i-1}.$
\end{theorem}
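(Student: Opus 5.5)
The plan is to characterize all resolving sets of $\Gamma=\zeta(G,G)$ completely, and then count them by size. By Proposition \ref{structure of graph}, $\Gamma=K_{p^{n-2}}\vee(p+1).K_{p^{n-1}-p^{n-2}}$ has diameter at most $2$. Every distance is therefore $0$, $1$ or $2$, and the distance vector of a vertex is determined by which twin class it lies in. Write $a=p^{n-2}=|\Omega_1|$ and $b=p^{n-1}-p^{n-2}=|B_k|$. Since $p$ is odd and $n\ge 3$, we have $b\ge p^2-p\ge 6$.

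\textbf{Claim.} A subset $S\subseteq G$ is resolving if and only if $S$ omits at most one vertex from each of the $p+2$ twin classes $\Omega_1,B_1,\dots,B_{p+1}$.

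Necessity is Remark \ref{5.1}, applied to the twin classes identified in the proof of Theorem \ref{Metric_dimension of graph}. For sufficiency, I will check pairs $u\neq v$:
\begin{enumerate}
\item If one of $u,v$ lies in $S$, say $u\in S$, then $d(u,u)=0\neq d(v,u)$ resolves them.
\item If $u,v\notin S$, they lie in different classes, because at most one vertex per class is missed.
\item If $u\in\Omega_1$ and $v\in B_k$, pick any $w\in S\cap B_j$ with $j\neq k$. This exists since $b\ge 2$. Then $d(u,w)=1$ and $d(v,w)=2$.
\item If $u\in B_i$ and $v\in B_j$ with $i\neq j$, pick $w\in S\cap B_i$. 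Then $d(u,w)=1$ and $d(v,w)=2$.
\end{enumerate}
This is essentially the argument of Theorem \ref{Metric_dimension of graph}, run for an arbitrary $S$ of this form. The main point to be careful about is that the witness vertices exist. That is exactly where $b\ge2$ is used, so that a block minus one vertex is still nonempty.

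Given the claim, the resolving sets factor over the twin classes. In a class $C$ we either take all of $C$ (one way, contributing $x^{|C|}$), or all but one vertex ($|C|$ ways, contributing $|C|x^{|C|-1}$). Hence
$$\beta(\Gamma,x)=\left(x^{a}+a\,x^{a-1}\right)\left(x^{b}+b\,x^{b-1}\right)^{p+1}=x^{p^n}\left(1+\tfrac{a}{x}\right)\left(1+\tfrac{b}{x}\right)^{p+1}.$$
Expanding, the coefficient of $x^{p^n-m}$ for $0\le m\le p+2$ is
$$\binom{p+1}{m}b^m+a\binom{p+1}{m-1}b^{m-1}.$$
This is where $s_i$ comes from, with $i=p^n-m$.

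Finally I will separate out the two lowest-degree terms to match the stated form.
\begin{enumerate}
\item For $m=p+2$ the coefficient is $a\,b^{p+1}$. This is the $x^{p^n-(p+2)}$ term and recovers $\beta(\Gamma)$.
\item For $m=p+1$ the coefficient is $b^{p+1}+a(p+1)b^p=b^p\,(b+(p+1)a)=b^p\cdot 2p^{n-1}$.
\end{enumerate}
The remaining coefficients, for $p^n-p\le i\le p^n$, are the $s_i$. Matching them to the paper's expression requires rewriting the binomial indices with $m=p^n-i$ and using $\binom{p+1}{k}=\binom{p+1}{p+1-k}$. This index bookkeeping is the only fiddly step, and I will verify it on the extreme cases $i=p^n$ and $i=p^n-p$.
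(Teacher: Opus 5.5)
\textbf{The final matching step fails, because the displayed formula is wrong.} Your characterization of the resolving sets is correct: they are exactly the subsets that omit at most one vertex from each of $\Omega_1,B_1,\dots,B_{p+1}$. The product formula $\beta(\Gamma,x)=(x^{a}+a\,x^{a-1})(x^{b}+b\,x^{b-1})^{p+1}$ is also correct, and so are your two lowest-degree coefficients. This is the same count as the paper's proof, which splits on whether $\Omega_1$ is taken whole, just packaged more cleanly.

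The gap is the step you defer as ``index bookkeeping''. It cannot be carried out, because your coefficient is not the stated $s_i$. Put $m=p^n-i$. The statement's $s_i$ is $\binom{p+1}{p+2-m}b^{m}+a\binom{p+1}{p+2-m}b^{m-1}$, whereas yours is $\binom{p+1}{p+1-m}b^{m}+a\binom{p+1}{p+2-m}b^{m-1}$. The second summands agree, but the first differ by one in the lower index. The symmetry $\binom{p+1}{k}=\binom{p+1}{p+1-k}$ turns $\binom{p+1}{m}$ into ${}^{p+1}C_{i-p^n+(p+1)}$, not ${}^{p+1}C_{i-p^n+(p+2)}$.

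The checks you promise expose this at once:
\begin{enumerate}
\item At $i=p^n$ the statement gives $s_{p^n}=0$, since both binomials equal ${}^{p+1}C_{p+2}=0$. Yet the whole vertex set is resolving.
\item At $i=p^n-1$ the statement gives $s_{p^n-1}=a+b=p^{n-1}$. Yet deleting any single vertex leaves a resolving set, so the true value is $p^n$, which is what your formula gives.
\item For $p=n=3$ the statement's coefficients of $x^{24},\dots,x^{27}$ are $1944,216,9,0$. The correct values are $1512,288,27,1$. Only the latter, together with $3888$ and $3888$, sum to $(1+a)(1+b)^{p+1}=4\cdot 7^{4}=9604$, the total number of resolving sets.
\end{enumerate}

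So the theorem as stated is false and cannot be proved. Your argument actually proves the corrected version, in which the first summand of $s_i$ is ${}^{p+1}C_{p^n-i}(p^{n-1}-p^{n-2})^{p^n-i}$. The paper's proof in fact derives this correct count: its case (1) gives ${}^{p+1}C_{t-1}(p^{n-1}-p^{n-2})^{p+2-t}$ with $t=i-p^n+(p+2)$. It then replaces $t-1$ by $t$ in the ``further simplifying'' step. Rather than asserting that your expansion matches the statement, you should record this off-by-one error explicitly.
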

\begin{proof}
    We begin by computing $s_j$ for $j=p^n-(p+2).$ Since all the twin sets in the graph $\Gamma$ are $\Omega_1,B_1,B_2,...,B_{p+1}.$ Total number of ways to form resolving sets of size $j$ is given by
    
    $$s_j={}^{p^{n-2}}C_{p^{n-2}-1}\big({}^{p^{n-1}-p^{n-2}}C_{p^{n-1}-p^{n-2}-1}\big)^{p+1}=p^{n-2}(p^{n-1}-p^{n-2})^{p+1}.$$

    Now, we determine the number of resolving sets of cardinality $p^n-(p+2)+t$ for $t\geq1,$ by including $t$ more vertices in the resolving set of size $\beta(\Gamma)$. For $t=1,$ we have two ways:
\begin{enumerate}
\renewcommand{\labelenumi}{(\roman{enumi})}
\item take new vertex from $\Omega_1$ 
\item take new vertex from $B_k,$ for some $k.$
\end{enumerate}

In first case, we take all vertices from $\Omega_1,$ so we have $\big({}^{p^{n-1}-p^{n-2}}C_{p^{n-1}-p^{n-2}-1}\big)^{p+1}=(p^{n-1}-p^{n-2})^{p+1}$ choices for such resolving sets.

In second case, there are $(p+1)$ blocks to choose from, therefore we have $(p^{n-2})(p^{n-1}-p^{n-2})^{p}(p+1)$ choices. This implies
$$s_{p^n-(p+2)+1}=(p^{n-1}-p^{n-2})^{p+1}+(p^{n-2})(p^{n-1}-p^{n-2})^p(p+1)=(p^{n-1}-p^{n-2})^p(2p^{n-1}).$$

Now we determine $s_{p^4-(p+2)+t}$ for $2\leq t\leq(p+2).$ There are again two ways to include $t$ more vertices in the resolving set of size $\beta(\Gamma).$ 
\begin{enumerate}
    \item one vertex from $\Omega_1$ and rest of $(t-1)$ from $\Omega_2$ 
    \item all $t$ vertices from  $\Omega_2$ 
\end{enumerate}

    In case (1) we get
    $ {}^{(p+1)}C_{t-1}.(p^{n-1}-p^{n-2})^{p+2-t}$ choices of resolving sets of size $p^4-(p+2)+t,~2\leq t\leq p+2. $ \\
    Similarly, in case (2),  we get $p^{n-2}. {}^{p+1}C_{t}.(p^{n-1}-p^{n-2})^{p+1-t},$ choices for resolving sets.\\
    This implies, for $i=\beta(\Gamma)+t,$ we get $$s_i={}^{p+1}C_{t-1}(p^{n-1}-p^{n-2})^{p+2-t}+p^{n-2}{}^{p+1}C_t(p^{n-1}-p^{n-2})^{p+1-t}.$$

     Further simplifying, we get $\beta(\Gamma,x)=x^{p^n-(p+2)}[p^{n-2}(p^{n-1}-p^{n-2})^{p+1}+x(p^{n-1}-p^{n-2})^{p}.2p^{n-1}]+
     \sum\limits_{i=p^n-p}^{p^n}\Big({}^{p+1}C_{i-p^n+(p+2)}(p^{n-1}-p^{n-2})^{p^n-i}+(p^{n-2}){}^{p+1}C_{i-p^n+(p+2)}(p^{n-1}-p^{n-2})^{p^n-i-1}\Big)x^i.$

    \end{proof}
\section{Spectral Properties of the commuting graph}
\label{section.spectral}
If $\lambda_1,\lambda_2,...,\lambda_k$ are distinct eigenvalues of a matrix $A$ with multiplicity $p_1,p_2,...,p_k$ respectively, then the set $\{\lambda_1^{p_1},\lambda_2^{p_2},...,\lambda_k^{p_k}\}$ is defined as the spectrum of $A.$ \\
For a simple graph $\Gamma$ with $k$ vertices $\{v_1,v_2,...,v_k\},$ the adjacency matrix $A(\Gamma)=[a_{ij}]$ is a square matrix of size $k$ defined as $a_{ij}=\begin{cases}
 0 & \text{ if } \hspace{0.2cm}   \text{ there is no edge between $v_i$ and $v_j$ }\\
1 & \text{ if }  \hspace{0.2cm}  \text{there is an edge between $v_i$ and $v_j.$ }
\end{cases}$ \\
The degree matrix $D(\Gamma)=[d_{ij}]$ is a diagonal matrix defined as $d_{ii}=\text{deg}(v_i).$ The Laplacian matrix for $\Gamma$ is defined as $L(\Gamma)=D(\Gamma)-A(\Gamma)$ and signless Laplacian matrix for $\Gamma$ is defined as $Q(\Gamma)=D(\Gamma)+A(\Gamma).$ Now we define normalized Laplacian matrix $\mathcal{L}(\Gamma)=l_{ij},$ where 
\begin{center}
$l_{ij}=\begin{cases}
1, &  \hspace{0.2cm}   \text{ $i=j,deg(v_i)\neq 0$}\\
 \frac{-1}{\sqrt{deg(v_i)deg(v_j)}}, &   \hspace{0.2cm}  \text{$v_iv_j\in E(\Gamma)$ }\\
 0, &  \hspace{0.2cm}   \text{otherwise,}
\end{cases}$ 
\end{center}
where $E(\Gamma)$ denotes the set of edges of the graph $\Gamma.$ 

Recall, $\zeta(G,G)$ denotes the commuting graph of group $G$ of size $p^n$ with $|Z(G)|=p^{n-2}.$ Using Proposition \ref{structure of graph}, we get
$$\zeta(G,G) =\begin{cases}
      K_{p^{n-2}}, & V=\Omega_1,\\
      K_{p^{n-1}-p^{n-2}}, & V=B_k, (1\leq k\leq p+1).
  \end{cases}$$
  Let $\mathcal{G}_1=K_{p^{n-2}}$ and $\mathcal{G}_2=(p+1).K_{p^{n-1}-p^{n-2}}.$ Let $S_1$ denotes the star graph where $\mathcal{G}_1$ is the central vertex and $\mathcal{G}_2$ is connected to $\mathcal{G}_1$. Now  we find $\Spec(K_n),\Spec_Q(K_n)$ and $\Spec_\mathcal{L}(K_n)$, which are spectrum of adjacency matrix, signless Laplacian matrix and normalized Laplacian matrix. Using \cite[Proposition 1.4.1]{book}, we get
  \begin{align*}
  \Spec(K_n) &=\{(-1)^{n-1},(n-1)^{1}\},\\
   \Spec(mK_n) &=\{(-1)^{m(n-1)},(n-1)^{m}\}.
   \end{align*}
   We know that signless Laplacian matrix
   $Q(K_n)=D(K_n)+A(K_n),$ where $D(K_{n})=(n-1)I$ is the degree matrix of $K_n,$ and  $A(K_n)$ is the adjacency matrix of $K_n.$ Now, it is easy to compute that 
   
 \begin{align*}
   \Spec_Q(K_n) &=\{2(n-1),(n-2)^{n -1}\}, \\
   \Spec_Q(mK_n) &=\{2(n-1)^m,(n-2)^{m(n -1)}\}.
   \end{align*}

Using \cite[Section 1.2]{Chung:1997}, we get the normalized Laplacian matrix 
$\mathcal{L}(K_n)=D(K_n)^{-\frac{1}{2}}L(K_n)D(K_n)^{-\frac{1}{2}},$ where
$L(K_n)=D(K_n)-A(K_n).$ Simplifying, we get
   $$ \mathcal{L}(K_n) =I-D(K_n)^{-\frac{1}{2}}A(K_n)D(K_n)^{-\frac{1}{2}}
    =I-\left(\frac{1}{n-1}\right)A(K_n).$$ Now, we compute
(
   
    \begin{align*}
   \Spec(\mathcal{L}(K_n)&=\left\{0^1,\left(\frac{n}{n-1}\right)^{n-1}\right\},\\\Spec_\mathcal{L}(K_n) &=\left\{0^1,\left(\frac{n}{n-1}\right)^{(n -1)}\right\},\\
   \Spec_\mathcal{L}(mK_n) &=\left\{0^m,\left(\frac{n}{n-1}\right)^{m(n -1)}\right\}.
   \end{align*}

Let $H$ be a  regular graph with $V(H)=\{1,2,...,k\}$ and $\mathcal{G}_i$ be a $r_i$-regular graph of order $n_i$ where $n_i=|V(\mathcal{G}_i)|,1\leq i\leq k.$ For the graph $H,~N_H(i)$ denotes the neighbors of $i.$ Define $N_i=\sum\limits _{j\in N_H(i)}n_j$ and from \cite[Section 2]{Chang-Xiang}, $C_Q(H)=c_{ij},$ where
$$c_{ij} =\begin{cases}
      2r_i+N_i, & i=j,\\
     \sqrt{n_in_j}, & ij\in E(H),\\
     0, & \text{otherwise}
  \end{cases}$$
With same notations as above, now we consider the graph $\zeta(G,G),$ the commuting graph of group $G$ of size $p^n$ and center size $p^{n-2}.$ We get 
\begin{align*}
(n_1,n_2)=(p^{n-2},(p+1)(p^{n-1}-p^{n-2})),\\
(r_1,r_2)=(p^{n-2}-1,p^{n-1}-p^{n-2}-1).
\end{align*}
From definition of star graph , $N_1=n_2\text{ (all neighbors of $N_1$) },N_2=n_1\text{ (all neighbors of $N_2$). }$\\
$(N_1,N_2)=((p^n-p^{n-2}),p^{n-2}),$ we compute
      
       \[C_Q(S_1)=
\begin{pmatrix}
p^{n-2}+p^n-2 & p^{n-2}\sqrt{p^2-1} \\
p^{n-2}\sqrt{p^2-1} & 2p^{n-1}-p^{n-2}-2
\end{pmatrix}
\]

\begin{theorem} \label{Theorem 6.1}
     Let $\Gamma=\zeta(G,G).$ Then $\Spec_Q(\Gamma)=\Spec(C_Q(S_1))\cup A,$ where $$A=\{(p^{n}-2)^{p^{n-2}-1},(2p^{n-1}-p^{n-2}-2)^p,(p^{n-1}-2)^{p^n-p^{n-2}-p-1}\}.$$
\end{theorem}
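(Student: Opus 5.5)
The plan is to diagonalize the signless Laplacian $Q=Q(\Gamma)$ directly by splitting $\mathbb{R}^{p^n}$ into $Q$-invariant subspaces adapted to the partition $\Omega_1,B_1,\dots,B_{p+1}$ of Proposition \ref{structure of graph}. This avoids citing the general join/generalized-composition theorem of \cite{Chang-Xiang}, although it reproduces exactly the matrix $C_Q(S_1)$. Write $m=p^{n-2}$ and $b=p^{n-1}-p^{n-2}$. By Corollary \ref{degree_of_vertices}, $Q$ has diagonal entry $p^n-1$ on $\Omega_1$ and $p^{n-1}-1$ on $\Omega_2$. Its off-diagonal entries are $1$ exactly between two central vertices, between a central and a non-central vertex, and between two distinct vertices of the same block $B_k$.

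\emph{Step 1 (vectors with zero sums on cells).} First take $x$ supported on $\Omega_1$ with $\sum x_v=0$. Every vertex outside $\Omega_1$ is adjacent to all of $\Omega_1$, so $(Qx)_w=\sum_{v\in\Omega_1}x_v=0$ there. For $u\in\Omega_1$ we get $(Qx)_u=(p^n-1)x_u+\sum_{v\in\Omega_1\setminus\{u\}}x_v=(p^n-2)x_u$. This gives the eigenvalue $p^n-2$ with multiplicity $m-1$.

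Next take $x$ supported on a single block $B_k$ with zero sum. The same computation gives $(Qx)_u=(p^{n-1}-1)x_u-x_u$ on $B_k$ and $0$ elsewhere, using that central vertices see the whole block and other blocks see none of it. This gives the eigenvalue $p^{n-1}-2$ with multiplicity $(p+1)(b-1)=p^n-p^{n-2}-p-1$.

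\emph{Step 2 (block-constant vectors).} Take $x$ that is zero on $\Omega_1$ and equal to a constant $c_k$ on each $B_k$, with $\sum_k c_k=0$. For $u\in\Omega_1$ we get $(Qx)_u=b\sum_k c_k=0$. For $u\in B_k$ we get $(Qx)_u=(p^{n-1}-1)c_k+(b-1)c_k=(2p^{n-1}-p^{n-2}-2)c_k$. This space has dimension $p$, which yields the middle term of $A$.

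\emph{Step 3 (the remaining two dimensions).} The orthogonal complement of Steps 1--2 is spanned by vectors that are constant $\alpha$ on $\Omega_1$ and constant $\beta$ on $\Omega_2$. Since $Q$ is symmetric, this space is also invariant. On it $Q$ acts by the equitable quotient matrix
\[
\begin{pmatrix} p^n+m-2 & p^n-m\\ m & 2p^{n-1}-m-2\end{pmatrix}.
\]
Conjugating by $\mathrm{diag}(\sqrt{m},\sqrt{p^n-m})$ symmetrizes it. The off-diagonal entry becomes $\sqrt{m(p^n-m)}=p^{n-2}\sqrt{p^2-1}$, so the result is exactly $C_Q(S_1)$, and its two eigenvalues complete the spectrum.

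Finally, count dimensions: $(m-1)+(p^n-m-p-1)+p+2=p^n$. So the subspaces, which are mutually orthogonal by construction, exhaust $\mathbb{R}^{p^n}$, and $\Spec_Q(\Gamma)=\Spec(C_Q(S_1))\cup A$.

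The only delicate step is Step 3. One has to check carefully the quotient entries, namely the row sums $p^n-1+(m-1)$ and $2p^{n-1}-m-2$, and confirm that the symmetrization matches the displayed $C_Q(S_1)$. The other steps are routine.
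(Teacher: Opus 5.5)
Your argument is correct, and it differs from the paper's proof in being self-contained.

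The paper invokes \cite[Theorem 2.1]{Chang-Xiang} for the join $\Gamma=\mathcal{G}_1\vee\mathcal{G}_2$ of the regular graphs $\mathcal{G}_1=K_{p^{n-2}}$ and $\mathcal{G}_2=(p+1)K_{p^{n-1}-p^{n-2}}$. In that theorem one deletes a single copy of $2r_i$ from each $\Spec_Q(\mathcal{G}_i)$, shifts the remaining eigenvalues by $N_i$, and appends $\Spec(C_Q(S_1))$. You instead write down the invariant subspaces explicitly: zero-sum vectors on each cell, block-constant zero-sum vectors on $\Omega_2$, and the two-dimensional space of cell-constant vectors. You then identify $C_Q(S_1)$ as the symmetrization of the equitable quotient matrix. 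This is exactly the mechanism behind the cited theorem, specialized to this graph.

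The paper's route is shorter, and it plugs into the same framework the paper reuses for $\Spec_{\mathcal{L}}$ in Theorem \ref{Theorem 6.2}. Your route has several advantages:
\begin{itemize}
\item It produces explicit eigenvectors.
\item It explains where the $\sqrt{n_in_j}$ entries of $C_Q(S_1)$ come from.
\item It makes clear why only one copy of $2r_2$ is removed even though $\mathcal{G}_2$ is disconnected: the other $p$ copies are your Step 2.
\item It avoids a slip in the paper's proof, where the $\Omega_1$ contribution is printed as $(p^{n-2}-2)^{p^{n-2}-1}$. The correct value is $(p^n-2)^{p^{n-2}-1}$, which is what appears in the statement and what your Step 1 obtains.
\end{itemize}
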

     \begin{proof}
       Using \cite[Theorem 2.1]{Chang-Xiang},   we get $$\Spec_Q(\Gamma)=\bigcup\limits_{i=1}^{2}\left(N_i+\left(\Spec_Q(\mathcal{G}_i)\setminus \{2r_i\}\right)\right)\cup \Spec(C_Q(S_1)).$$
        We compute 
         \begin{align*}
           N_1+\left(\Spec_Q(K_{p^{n-2}}\setminus \{2r_1\}\right) &=
           (p^n-p^{n-2})+\{2(p^{n-2}-1),(p^{n-2}-2)^{p^{n-2}-1}\} \setminus\{2(p^{n-2}-1)\}\\
           &=\{(p^{n-2}-2)^{p^{n-2}-1}\}.\\   
         \end{align*}

Similarly, we get $$N_2+\left(\Spec_Q(\mathcal{G}_2)\setminus \{2r_2\}\right)=\{(2p^{n-1}-p^{n-2}-2)^p,(p^{n-1}-2)^{p^n-p^{n-2}-p-1}\}.$$
Combining this, we get
$\Spec_Q(\Gamma)=\Spec(C_Q(S_1))\cup A,$ where  $$A=\{(p^{n}-2)^{p^{n-2}-1},(2p^{n-1}-p^{n-2}-2)^p,(p^{n-1}-2)^{p^n-p^{n-2}-p-1}\}.$$
\end{proof}

\begin{example}
    We compute the signless Laplacian spectrum of commuting graph of group $G$ of size $3^5$ with $|Z(G)|=3^3.$ In this case,  $\mathcal{G}_1=K_{27},~ \mathcal{G}_2=4K_{54},$ and $\Gamma=K_{27}\vee 4K_{54}.$ We compute 
    \[C_Q(S_1)=
\begin{pmatrix}
268 & 27\sqrt{8} \\
27\sqrt{8}& 133
\end{pmatrix}
\]
Now, using Theorem \ref{Theorem 6.1}, we get 
\begin{align*}
  \Spec_Q(\Gamma)&=\{ 241^{26},133^3,79^{212} \}\cup\Spec(C_Q(S_1))\\
&=\{241^{26},133^3,79^{212},98.577,302.423\}.  
\end{align*}

\end{example}

     \begin{theorem} \label{Theorem 6.2} 
         Let $\Gamma=\zeta(G,G).$ Then the normalized Laplacian Spectrum $\Spec_\mathcal{L}(\Gamma)$ is
         $$\left\{ 0, \frac{p^n(p^{n-1}+p^{n-2}-p^{n-3}-1)}{(p^n-1)(p^{n-1}-1)} , \left( \frac{p^n}{p^n-1} \right) ^{p^{n-2}-1}, \left( \frac{p^{n-1}}{p^{n-1}-1} \right)^{p^{n}-p^{n-2}-p-1 }, \left( \frac{p^{n-2}}{p^{n-1}-1} \right)^{p } \right\}.$$
     \end{theorem}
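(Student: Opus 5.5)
The plan is to diagonalize $\mathcal{L}(\Gamma)$ directly. By Proposition \ref{structure of graph}, $\Gamma=K_{a}\vee (p+1)K_{b}$ with $a=p^{n-2}$ and $b=p^{n-1}-p^{n-2}$. By Corollary \ref{degree_of_vertices}, vertices of $\Omega_1$ have degree $d_1=p^n-1$ and vertices of $\Omega_2$ have degree $d_2=p^{n-1}-1$. Since the degree is constant on $\Omega_1$ and on each $B_k$, I would work with $\mathcal{L}(\Gamma)=I-D^{-1/2}AD^{-1/2}$. On vectors supported in a set of constant degree it acts as $I-\frac{1}{d}A$, which makes it easy to write down explicit eigenvectors. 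I would then check that the multiplicities add up to $p^n$.

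\emph{Step 1: vectors summing to zero on $\Omega_1$.} Take $x$ supported on $\Omega_1$ with $\sum x=0$. Every vertex outside $\Omega_1$ is adjacent to all of $\Omega_1$, so $(Ax)_v=0$ there. Inside the clique $\Omega_1$ we get $(Ax)_v=-x_v$. Hence $\mathcal{L}x=\bigl(1+\frac{1}{p^n-1}\bigr)x=\frac{p^n}{p^n-1}x$, and this eigenvalue has multiplicity $a-1=p^{n-2}-1$.

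\emph{Step 2: vectors summing to zero on a single block.} Take $x$ supported on one $B_k$ with $\sum x=0$. By the same argument, $\mathcal{L}x=\frac{p^{n-1}}{p^{n-1}-1}x$. This gives multiplicity $(p+1)(b-1)=p^n-p^{n-2}-p-1$.

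\emph{Step 3: vectors constant on blocks.} Take $x$ equal to a constant $c_k$ on each $B_k$, zero on $\Omega_1$, with $\sum_k c_k=0$. A central vertex then sees $b\sum_k c_k=0$, and a vertex of $B_k$ sees $(b-1)c_k$. So
$$\mathcal{L}x=\Bigl(1-\frac{b-1}{p^{n-1}-1}\Bigr)x=\frac{p^{n-2}}{p^{n-1}-1}\,x,$$
with multiplicity $p$.

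\emph{Step 4: the remaining two eigenvalues.} Steps 1--3 account for $p^n-2$ eigenvalues. The last two come from the $2$-dimensional space of vectors constant on $\Omega_1$ and constant on $\Omega_2$, which is invariant because $\{\Omega_1,\Omega_2\}$ is an equitable partition. Eigenvalues of $\mathcal{L}$ on this space correspond to those of $I-M$, where $M$ is the quotient of $D^{-1}A$:
$$M=\begin{pmatrix}\frac{a-1}{d_1} & \frac{(p+1)b}{d_1}\\[2pt] \frac{a}{d_2} & \frac{b-1}{d_2}\end{pmatrix}.$$
Its rows sum to $1$, so $1$ is an eigenvalue of $M$, giving $0\in\Spec_\mathcal{L}(\Gamma)$. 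The other eigenvalue of $M$ is $\operatorname{tr}M-1$. Hence the last normalized Laplacian eigenvalue is
$$2-\frac{p^{n-2}-1}{p^n-1}-\frac{p^{n-1}-p^{n-2}-1}{p^{n-1}-1}.$$
Alternatively, it can be obtained from $\operatorname{tr}\mathcal{L}=p^n$ minus the sum of the eigenvalues already found.

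The main obstacle I expect is this final algebraic step: putting the expression over the common denominator $(p^n-1)(p^{n-1}-1)$ and checking that the numerator collapses to $p^n(p^{n-1}+p^{n-2}-p^{n-3}-1)$. I would expand both numerators in powers of $p$ and compare term by term.
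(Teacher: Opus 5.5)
Your proposal is correct. It reaches the same decomposition as the paper, but by a different mechanism.

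\textbf{How the paper argues.} The paper constructs no eigenvectors. It regards $\Gamma$ as the $H$-join, over the two-vertex graph $S_1$, of the regular graphs $\mathcal{G}_1=K_{p^{n-2}}$ and $\mathcal{G}_2=(p+1)K_{p^{n-1}-p^{n-2}}$. It then quotes \cite[Theorem 3.1]{Chang-Xiang}. That theorem sends every eigenvalue $\lambda$ of $\mathcal{L}(\mathcal{G}_i)$, except one copy of $0$, to $\frac{N_i}{r_i+N_i}+\frac{r_i}{r_i+N_i}\lambda$, and adjoins the spectrum of the symmetric $2\times 2$ matrix $C_{\mathcal{L}}(S_1)$.

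\textbf{How your steps match it.} In effect you reprove this special case by hand:
\begin{itemize}
\item Your Step 1 is the image of the nonzero eigenvalues of $\mathcal{L}(K_{p^{n-2}})$.
\item Your Step 2 is the image of the eigenvalue $\frac{b}{b-1}$ of $\mathcal{L}(K_b)$.
\item Your Step 3 is the image of the $p$ surplus zeros of the disconnected graph $\mathcal{G}_2$.
\item Your $I-M$ is diagonally similar to $C_{\mathcal{L}}(S_1)$. Your observation that the rows of $M$ sum to $1$ replaces the paper's remark that $\det C_{\mathcal{L}}(S_1)=0$.
\end{itemize}

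\textbf{What each route buys.} The citation is shorter and applies to any join of regular graphs. Your route is self-contained and gives explicit, mutually orthogonal eigenvectors, so the multiplicity count is immediate. It also makes visible a point the paper's notation $\Spec_{\mathcal{L}}(\mathcal{G}_2)\setminus\{0\}$ leaves implicit: only one zero is removed, and the remaining $p$ zeros are exactly what produce $\bigl(\frac{p^{n-2}}{p^{n-1}-1}\bigr)^{p}$.

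\textbf{The step you flagged.} The final algebra does work out. Your last eigenvalue equals $\frac{p^n-p^{n-2}}{p^n-1}+\frac{p^{n-2}}{p^{n-1}-1}$. Over the common denominator $(p^n-1)(p^{n-1}-1)$, its numerator is $(p^n-p^{n-2})(p^{n-1}-1)+p^{n-2}(p^n-1)=p^{2n-1}+p^{2n-2}-p^{2n-3}-p^n$. This equals $p^n(p^{n-1}+p^{n-2}-p^{n-3}-1)$, as required.
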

     \begin{proof}
        With same notations as discussed above, using \cite[Theorem 3.1]{Chang-Xiang}, we get
        $$\Spec_\mathcal{L}(\Gamma)=\bigcup\limits_{i=1}^{2}\left(\frac{N_i}{r_i+N_i}+\frac{r_i}{r_i+N_i}(\Spec_\mathcal{L}(\mathcal{G}_i)\setminus\{0\})\right)\bigcup \Spec(C_\mathcal{L}(S_1))$$ and using \cite[Section 3]{Chang-Xiang}, we get $C_{\mathcal{L}}(S_1)=(c_{ij}),$ where
        $$c_{ij} =\begin{cases}
      \frac{N_i}{r_i+N_i}, & i=j,deg(v_i)\neq 0\\
     -\sqrt{\frac{n_in_j}{(r_i+N_i)(r_j+N_j)}}, & ij\in E(H),\\
     0, & \text{otherwise.}
  \end{cases}$$
We compute \[C_\mathcal{L}(S_1)=
\begin{pmatrix}
\frac{p^n-p^{n-2}}{p^n-1} & -\sqrt{\frac{p^{n-2}(p^n-p^{n-2})}{(p^n-1)(p^{n-1}-1)}} \\
-\sqrt{\frac{p^{n-2}(p^n-p^{n-2})}{(p^n-1)(p^{n-1}-1)}} & \frac{p^{n-2}}{p^{n-1}-1}
\end{pmatrix}
\]
We notice that the determinant of the matrix $C_\mathcal{L}(S_1)$ is zero. Therefore, we get
$$\Spec(C_\mathcal{L}(S_1))=\left\{0, \frac{p^n(p^{n-1}+p^{n-2}-p^{n-3}-1)}{(p^n-1)(p^{n-1}-1)} \right\}.$$
We know that,

\begin{align*}
 \Spec_\mathcal{L}(K_{p^{n-2}}) &=\left\{\left(\frac{p^{n-2}}{p^{n-2}-1}\right)^{p^{n-2}-1},0\right\},\\
\Spec_\mathcal{L}((p+1)K_{p^{n-1}-p^{n-2}}) &=\left\{\left(\frac{p^{n-1}-p^{n-2}}{p^{n-1}-p^{n-2}-1}\right)^{p^n-p^{n-2}-p-1},0^{p+1}\right\}. 
\end{align*}

Using this, we compute 
\begin{align*}
    \frac{N_1}{r_1+N_1}+\frac{r_1}{r_1+N_1}\left(\Spec_\mathcal{L}(\mathcal{G}_1)\setminus \{0\}\right) &=\left(\frac{p^n-p^{n-2}}{p^n-1
}\right)+\frac{p^{n-2}-1}{p^n-1}\left(\left\{\left(\frac{p^{n-2}}{p^{n-2}-1\
}\right)^{p^{n-2}-1},0\right\}\setminus\left\{0\right\}\right)\\
&= \left\{ \left( \frac{p^n}{p^n-1} \right) ^{p^{n-2}-1}\right\}.
\end{align*}

Similarly, we compute 
$$\frac{N_2}{r_2+N_2}+\frac{r_2}{r_2+N_2}\left(\Spec_\mathcal{L}(\mathcal{G}_2)\setminus \{0\}\right) = \left\{ \left( \frac{p^{n-1}}{p^{n-1}-1} \right)^{p^{n}-p^{n-2}-p-1 }, \left( \frac{p^{n-2}}{p^{n-1}-1} \right)^{p }\right\}.$$
Combining the above, we get the result.
\end{proof}

\begin{example}
We compute the normalized Laplacian spectrum of commuting graph of group $G$ of size $3^5$ with $|Z(G)|=3^3.$ In this case $\mathcal{G}_1=K_{27},~ \mathcal{G}_2=4K_{54},~ \text{and}~\Gamma=K_{27}\vee4K_{54}.$ 
Using Theorem \ref{Theorem 6.2}, we get\\
$$\Spec_\mathcal{L}(\Gamma)=\left\{\ 0,\frac{11907}{9680}, \left(\frac{243}{242}\right)^{26},\left(\frac{81}{80}\right)^{212},\left(\frac{27}{80}\right)^3 \right\}.$$

\end{example}

\printbibliography
\end{document}